\newtheorem*{theorem*}{Theorem}
\newtheorem{theorem}{Theorem}[section]
\newtheorem{proposition}[theorem]{Proposition}
\newtheorem{corollary}[theorem]{Corollary}
\newtheorem{lemma}[theorem]{Lemma}
\theoremstyle{definition}
\newtheorem{definition}[theorem]{Definition}
\newtheorem{remark}[theorem]{Remark}
\begin{document}

\title[A characterization of cut locus for $C^1$ hypersurfaces]{A characterization of cut locus for $C^1$ hypersurfaces}
\author[Tatsuya Miura]{Tatsuya Miura}
\address{%
Graduate School of Mathematical Sciences\\
University of Tokyo\\
3-8-1 Komaba, Meguro, Tokyo\\
153-8914 Japan}
\email{miura@ms.u-tokyo.ac.jp}
\keywords{Distance function, Eikonal equation, Singularity, Ridge, Cut locus, Radius of curvature}
\subjclass[2010]{26B05, and 53A05}

\begin{abstract}
	Let $\Omega$ be an open set in $\mathbb{R}^n$ with $C^1$-boundary and $\Sigma$ be the skeleton of $\Omega$, which consists of points where the distance function to $\partial\Omega$ is not differentiable.
	This paper characterizes the cut locus (ridge) $\overline{\Sigma}$, which is the closure of the skeleton, by introducing a generalized radius of curvature and its lower semicontinuous envelope.
	As an application we give a sufficient condition for vanishing of the Lebesgue measure of $\overline{\Sigma}$.
\end{abstract}

\maketitle

\section{Introduction}

Let $n\geq2$ and $\Omega\subset\mathbb{R}^n$ be an open set with nonempty boundary $\partial\Omega$.
The {\it distance function} $d:\Omega\to(0,\infty)$ and the {\it metric projection} $\pi:\Omega\to\mathcal{P}(\partial\Omega)$ are defined by
$$d(x):=\inf_{\xi\in\partial\Omega}|\xi-x|,\quad \pi(x):=\{\xi\in\partial\Omega \mid d(x)=|\xi-x|\}.$$
The relation between the differentiability of the distance function and the number of elements of the metric projection is well-known:
\begin{theorem}[{e.g.\ \cite[Corollary 3.4.5]{CaSi04}}] \label{knownthm1}
	The distance function $d$ is differentiable at $x\in\Omega$ if and only if $\pi(x)$ is a singleton.
\end{theorem}
\noindent
Thus we define the singular set $\Sigma\subset\Omega$ called {\it skeleton} (or {\it medial axis}) by
$$\Sigma:=\{x\in\Omega \mid \pi(x)\text{ is not a singleton}\}.$$
Besides the above characterization, several properties of the skeletons are known for general open sets $\Omega$: the skeletons are $C^2$-rectifiable \cite{Al94}, in particular they have null Lebesgue measure, and they have the same homotopy type as $\Omega$ at least in the bounded case \cite{Li03} (cf.\ \cite{AlCaNgSi13}).
Skeletons (medial axes) are also studied in view of image processing, see e.g.\ \cite{AtBoEd09} and references therein.
The generality of $\Omega$ is important to contain noisy cases.

This paper studies the set $\overline{\Sigma}$, which is the closure of the skeleton in $\Omega$, called {\it cut locus} (or {\it ridge}).
The complement of the cut locus concerns the differentiability of the distance function not only at points but also in neighborhoods of points.

Cut loci have also been studied in several (generalized) settings, see e.g.\ \cite{CrMa07,ItTa01,LiNi05,MaMe03}.
The complement set $\Omega\setminus\overline{\Sigma}$ is the largest open set where the distance function is of class $C^1$.
For general $\Omega$ the distance function is a unique nonnegative viscosity solution to the simplest Eikonal equation $|\nabla u|=1$ with the zero Dirichlet condition (see e.g.\ \cite{HaNt15,Li82}).
Since this is a first order equation, it is natural to know where the solution is of class $C^1$.
We mention that cut loci also appear in the studies of other partial differential equations, see e.g.\ \cite{CrFr15,CrFr16,LiNi98}.

Unlike skeletons, some properties of cut loci crucially depend on the regularity of boundary.
In particular, it is critical whether the boundary is of class $C^2$.
If the boundary is at least $C^2$ then the cut locus behaves rather well \cite{CrMa07,ItTa01,LiNi05,MaMe03}; in particular it still has null Lebesgue measure \cite{CrMa07}.
On the other hand, in \cite[\S3]{MaMe03}, Mantegazza-Mennucci give a pathological example of a planar convex $C^{1,1}$-domain such that the cut locus has positive Lebesgue measure.

Our purpose is to find a general theory for cut loci without the $C^2$ assumption, that is, with pathological cases.
We emphasize that the theories in the above cited papers basically work for only regular sets at least $C^2$.
In this paper, as a first step, we characterize the cut locus by a geometric quantity of the boundary for a general open set with $C^1$-boundary.

We first recall a characterization of cut loci by radius of curvature in the $C^2$ case:
\begin{theorem}[{e.g.\ \cite{CrMa07}}] \label{knownthm2}
	Let $\partial\Omega$ be of class $C^2$.
	Then $x\in\Omega\setminus\overline{\Sigma}$ if and only if $\pi(x)$ is a singleton and $d(x)<\rho(\xi)$, where $\pi(x)=\{\xi\}$. 
\end{theorem}
\noindent
Here $\rho(\xi)$ is the classical inner radius of curvature of $\partial\Omega$ at $\xi$ (see Remark \ref{remradcur} for the definition).
Theorem \ref{knownthm2} is well-known: the cited paper \cite{CrMa07} proves it in terms of principal curvature $\kappa$ instead of $\rho$ (for the Minkowski distance).
Theorem \ref{knownthm2} means that the differentiability of $d$ near a point depends on not only the global shape of $\partial\Omega$ but also the local shape as curvature. 
That curvature appears in the statement also tells the importance of the $C^2$ assumption.

The main result of this paper is to characterize cut loci in the $C^1$ case by a kind of radius of curvature.
To state it we should extend the definition of radius of curvature $\rho$.
This can be easily achieved for general open sets by using locally inner touching spheres (see Definition \ref{defradcur}). 
Unfortunately, by just this extension, the ``if'' part of the above characterization is not valid even in the $C^{1,1}$ case due to the loss of rigidity of $C^2$.
Indeed, the example provided in \cite[\S3]{MaMe03} is also a counterexample to this case.
Nevertheless, we can characterize cut loci by taking a lower semicontinuous envelope of radius of curvature $\rho_*$ as Definition \ref{defradcur}.
The main result of this paper is the following:
\begin{theorem}\label{thm1}
	Let $\partial\Omega$ be of class $C^1$.
	Then $x\in\Omega\setminus\overline{\Sigma}$ if and only if $\pi(x)$ is a singleton and $d(x)<\rho_*(\xi)$, where $\pi(x)=\{\xi\}$.
\end{theorem}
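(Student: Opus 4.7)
The plan is to establish both directions of the equivalence separately. The singleton part of the ``only if'' direction is immediate from $\Sigma\subset\overline{\Sigma}$; what requires argument is the inequality $d(x)<\rho_*(\xi)$ in the ``only if'' direction and its converse.

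For the ``if'' direction, I would first pick $r$ with $d(x)<r<\rho_*(\xi)$. By the definition of the lower semicontinuous envelope, there is a neighborhood $V$ of $\xi$ in $\partial\Omega$ on which $\rho>r$, so that every $\xi'\in V$ admits an inner touching ball $B(\xi'+r\nu(\xi'),r)\subset\Omega$; here $\nu(\xi')$ is the inward unit normal, continuous by $C^1$-regularity. For $y$ sufficiently close to $x$ one has $d(y)<r$ and, by upper semicontinuity of $\pi$, $\pi(y)\subset V$. Suppose $\pi(y)$ contained two distinct points $\xi_1,\xi_2$. The $C^1$ tangency condition forces $y=\xi_i+d(y)\nu(\xi_i)$ for $i=1,2$, whence $\nu(\xi_2)-\nu(\xi_1)=(\xi_1-\xi_2)/d(y)$. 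Expanding the ball inclusions $|\xi_j-(\xi_i+r\nu(\xi_i))|\geq r$ yields $(\xi_j-\xi_i)\cdot\nu(\xi_i)\leq|\xi_1-\xi_2|^2/(2r)$; summing the two cases and inserting the previous identity produces $d(y)\geq r$, contradicting $d(y)<r$. Hence $\pi(y)$ is a singleton on a neighborhood of $x$, so $x\notin\overline{\Sigma}$.

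For the ``only if'' direction, pick an open neighborhood $U$ of $x$ with $U\cap\overline{\Sigma}=\emptyset$, on which the projection $\eta:U\to\partial\Omega$ with $\pi(y)=\{\eta(y)\}$ is well-defined and continuous (by single-valuedness together with upper semicontinuity of $\pi$), satisfying $y=\eta(y)+d(y)\nu(\eta(y))$. Consider
\[
\Phi:U\to\partial\Omega\times(0,\infty),\quad \Phi(y)=(\eta(y),d(y)),\qquad \Psi(\xi',t)=\xi'+t\nu(\xi'),
\]
both continuous. The identity $\Psi\circ\Phi=\mathrm{id}_U$ shows $\Phi$ is a continuous injection from the open set $U\subset\mathbb{R}^n$ into the topological $n$-manifold $\partial\Omega\times(0,\infty)$. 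Invariance of domain then forces $\Phi(U)$ to be open in $\partial\Omega\times(0,\infty)$, hence it contains a product neighborhood $V\times(d(x)-\delta,d(x)+\delta)$ of $\Phi(x)=(\xi,d(x))$. For each $\xi'\in V$ and $t<d(x)+\delta$, the preimage supplies $y\in U$ with $B(y,t)\subset\Omega$ tangent at $\xi'$, so $\rho(\xi')\geq d(x)+\delta$, and passing to the liminf gives $\rho_*(\xi)\geq d(x)+\delta>d(x)$.

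The hard part is the ``only if'' direction. The easy pointwise bound $\rho(\xi)\geq d(x)$, coming from $B(x,d(x))\subset\Omega$, falls short of the required uniform bound $\rho_*(\xi)>d(x)$, which demands control of $\rho$ at \emph{all} boundary points near $\xi$. In the $C^2$ case one argues that the normal map $\xi'\mapsto\xi'+t\nu(\xi')$ is a local diffeomorphism, but that tool is unavailable under only $C^1$-regularity. Coupling $\eta$ with $d$ into $\Phi$ and invoking invariance of domain replaces the missing differential argument by a purely topological one based on matching dimensions, and is the critical move of the proof.
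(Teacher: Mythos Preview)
Your two directions follow genuinely different routes from the paper, and they have different status.

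\textbf{``Only if'' direction.} Your invariance-of-domain argument is correct and elegant. The paper instead takes a lower semi-sphere $S^{n-1}_{\varepsilon_1,-}\subset\Omega\setminus\overline{\Sigma}$ around $x$, shows via a Brouwer-type lemma that $\hat\pi(S^{n-1}_{\varepsilon_1,-})$ covers a full neighborhood $B^n_{\bar r}(\xi)\cap\partial\Omega$, and reads off the uniform lower bound on $\rho$ from that. Both arguments are topological at heart (yours uses invariance of domain for the injection $\Phi=(\eta,d)$; the paper uses a degree/retraction obstruction), but yours packages the topology more cleanly and avoids the explicit semi-sphere construction. The paper's version, on the other hand, isolates a statement (Proposition~\ref{mainprop2}) that holds under the weaker ``non-spreading inner perpendicular'' hypothesis rather than full $C^1$.

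\textbf{``If'' direction.} Here there is a genuine gap. From $\rho(\xi')>r$ you conclude that $B(\xi'+r\nu(\xi'),r)\subset\Omega$, and this is what drives the key inequality $|\xi_j-(\xi_i+r\nu(\xi_i))|\ge r$. But by Definition~\ref{defradcur} the radius of curvature is defined through \emph{locally} inner touching balls: $\rho(\xi')>r$ only guarantees $B(\xi'+r\nu(\xi'),r)\cap U_{\xi'}\subset\Omega$ for \emph{some} neighborhood $U_{\xi'}$ of $\xi'$, with no uniform control on $U_{\xi'}$ as $\xi'$ varies. Your exclusion $\xi_j\notin B(\xi_i+r\nu(\xi_i),r)$ therefore needs $\xi_j\in U_{\xi_i}$, and nothing you have written rules out that $U_{\xi_i}$ shrinks faster than $|\xi_1-\xi_2|$ as $y\to x$. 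Filling this in would require a local-to-global step (essentially that the pointwise viscosity-type lower curvature bound propagates to a semiconvexity estimate on the graph $g$), which is nontrivial and is precisely the subtlety the paper is designed to handle. The paper avoids the issue entirely by proving the contrapositive: starting from $x\in\overline{\Sigma}\setminus\Sigma$ it takes $x_k\to x$ in $\Sigma$, picks two foot points $\xi_k^1,\xi_k^2\in\pi(x_k)$, and uses a one-dimensional sliding argument on the segment between their projections to locate a third boundary point $\xi_k^3$ at which \emph{no} sphere of radius exceeding roughly $d(x_k)$ can touch from below, giving $\rho(\xi_k^3)\le d(x_k)\sqrt{1+|\nabla g|^2}$ and hence $\rho_*(\xi)\le d(x)$. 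That argument manufactures small-$\rho$ points rather than trying to exploit large-$\rho$ points, and so never needs the local inclusion to be global.
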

\noindent
Since $\rho_*$ coincides with classical $\rho$ in the $C^2$ case, Theorem \ref{thm1} is a generalization of Theorem \ref{knownthm2}.

As an application of this characterization, in \S 4, we show that if $\partial\Omega$ is $C^{1,1}$ and almost $C^2$ then the Lebesgue measure of the cut locus vanishes.

We finally emphasize that our proof of Theorem \ref{thm1} quite differs from that of $C^2$ case; it does not calculate second derivatives and is more geometric.
The ``if'' part is proved by considering its contrapositive.
The inequality $d(x)\geq\rho_*(\xi)$ is obtained for $x\in\overline{\Sigma}\setminus\Sigma$ by seeking a sequence of suitably ``curving'' points in $\partial\Omega$ converging to $\xi$.
To state ``curving'' we use comparisons of functions.
Our proof of this part depends on the local $C^1$-graph representation of $\partial\Omega$.
The ``only if'' part is proved by showing that, for $x\in\Omega\setminus\overline{\Sigma}$, any point near $\xi$ has a locally inner touching sphere with suitably large radius.
We then use the homotopy theory as mapping degree.
This part is proved for a wider class of $\Omega$ (see Definition \ref{defnonsprper}).
Another more conceptual and geometric proof is also given.

The organization of this paper is as follows.
Some notations and known results are prepared in \S2.
Theorem \ref{thm1} is proved in \S3. 
The proof is separated into the ``if'' part \S3.1 and the ``only if'' part \S3.2.
Using Theorem \ref{thm1}, a sufficient condition for vanishing of the Lebesgue measures of cut loci is given in \S4.
Finally, we mention remarks for non-smooth cases in \S5.

\section{Envelope of inner radius of curvature}

In this section, we prepare notation and review a known result.

We first prepare some notations.
Let $B^m_r(x)$ denote an $m$-dimensional open ball of radius $r$ centered at $x\in\mathbb{R}^m$ and $\overline{B}^m_r(x)$ denote the closure.
Let $S^{m-1}_r(x)$ denote an $(m-1)$-dimensional sphere of radius $r$ centered at $x\in\mathbb{R}^m$, that is, $S^{m-1}_r(x)=\partial B^m_r(x)$.
Let $\mathcal{U}^m_x$ denote the set of open neighborhoods of $x$ in $\mathbb{R}^m$.

Then we define a generalized inner radius of curvature as follows.

\begin{definition}\label{defradcur}
	Let $\Omega\subset\mathbb{R}^n$ be an open set with nonempty boundary and $\xi\in\partial\Omega$.
	We say that an open ball $B^n_r(x)\subset\mathbb{R}^n$ is a {\it locally inner touching ball} at $\xi$ if $\xi\in\partial B^n_r(x)$ and there exists a neighborhood $U\in\mathcal{U}^n_\xi$ such that $B^n_r(x)\cap U$ is contained in $\Omega\cap U$.
	We denote the set of locally inner touching balls at $\xi$ by $\mathfrak{B}^n_\xi$.
	Then we define the {\it inner radius of curvature} at $\xi$ by
	\begin{align*}
		\rho(\xi):=
		\begin{cases}
			\sup\left\{r>0\left|B^n_r(x)\in\mathfrak{B}^n_\xi\right\}\right. & \text{if}\ \mathfrak{B}^n_\xi\neq\emptyset,\\
			0 & \text{if}\ \mathfrak{B}^n_\xi=\emptyset.
		\end{cases}
	\end{align*}
	Note that $\rho:\partial\Omega\to[0,\infty]$.
	Moreover, we denote by $\rho_*$ the lower semicontinuous envelope of $\rho$, that is, for $\xi\in\partial\Omega$
	\begin{align*}
		\rho_*(\xi):=\lim_{r\downarrow0}\inf\{\rho(\eta) \mid \eta\in B^n_r(\xi)\cap\partial\Omega\}.
	\end{align*}
	The function $\rho_*:\partial\Omega\to[0,\infty]$ is lower semicontinuous.
\end{definition}

\begin{remark}\label{remradcur}
	If $\partial\Omega$ is of class $C^2$ then both $\rho$ and $\rho_*$ coincide with the classical inner radius of curvature $1/\kappa$, where
	$$\kappa:=\max\{0,\kappa_1,\dots,\kappa_{n-1}\}.$$
	Here $\kappa_1,\dots,\kappa_{n-1}$ are the inner principal curvatures of $\partial\Omega$ and we interpret $1/\kappa=\infty$ when $\kappa=0$.
\end{remark}

Finally we review a well-known result about the continuity of the metric projection which is frequently used in this paper.
The proof is elementary so is safely omitted.

\begin{lemma}\label{metprolem}
	For any open set $\Omega\subset\mathbb{R}^n$ with nonempty boundary, the map $\pi:\Omega\to\mathcal{P}(\partial\Omega)$ is set-valued upper semicontinuous.
	In particular, the induced map $\hat{\pi}:\Omega\setminus\Sigma\to\partial\Omega$ defined by $x\mapsto\xi\in\pi(x)$ is well-defined and continuous. 
\end{lemma}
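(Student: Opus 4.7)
The plan is to reduce upper semicontinuity to the closed-graph property combined with a compactness argument, using the facts that the distance function $d$ is continuous ($1$-Lipschitz) on $\Omega$ and that $\pi(x)$ is always a nonempty compact subset of $\partial\Omega$. Nonemptiness and compactness of $\pi(x)$ come from noting that the infimum of $\xi\mapsto|\xi-x|$ on the closed set $\partial\Omega$ is actually attained on the bounded closed set $\partial\Omega\cap\overline{B}^n_{d(x)+1}(x)$.

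First I would verify the closed-graph property of $\pi$: if $x_k\to x$ in $\Omega$ and $\xi_k\in\pi(x_k)$ converges to some $\xi\in\mathbb{R}^n$, then $\xi\in\pi(x)$. This is immediate from $|\xi_k-x_k|=d(x_k)$, the continuity of $d$, and the closedness of $\partial\Omega$ in $\mathbb{R}^n$.

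To pass from this to the topological formulation of set-valued upper semicontinuity, I would argue by contradiction. Suppose $V\subset\mathbb{R}^n$ is open with $\pi(x)\subset V$, but no neighborhood of $x$ is sent into $V$ by $\pi$. Then there exist $x_k\to x$ and $\xi_k\in\pi(x_k)\setminus V$. Since $|\xi_k|\le|x_k|+d(x_k)$ and both terms are bounded, a subsequence $\xi_{k_j}$ converges to some $\xi$; the limit lies in the closed set $\mathbb{R}^n\setminus V$, but by the closed-graph step it must belong to $\pi(x)\subset V$, contradiction.

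For the induced map $\hat{\pi}$: on $\Omega\setminus\Sigma$ each $\pi(x)$ is a singleton by definition, so $\hat{\pi}$ is well-defined. For continuity, given $x_k\to x$ in $\Omega\setminus\Sigma$, every subsequence of $\{\hat{\pi}(x_k)\}$ is bounded and therefore admits, by the same closed-graph argument, a further convergent sub-subsequence whose limit lies in $\pi(x)=\{\hat{\pi}(x)\}$. By the standard subsequence principle this forces $\hat{\pi}(x_k)\to\hat{\pi}(x)$. There is no serious obstacle here; the whole argument is a routine application of the continuity of $d$ together with elementary compactness, which is why the statement is folklore.
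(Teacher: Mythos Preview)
Your argument is correct and is essentially the same as the paper's: both prove upper semicontinuity by contradiction, extracting a convergent subsequence of nearest points and using continuity of $d$ to force the limit into $\pi(x)$. The only cosmetic differences are that you phrase things via the closed-graph property and the open-set formulation of upper semicontinuity (the paper uses $\varepsilon$-neighborhoods of $\pi(x)$), and you also spell out the continuity of $\hat{\pi}$, which the paper leaves to the reader.
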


\section{Characterization of the cut locus by radius of curvature}

In this section we prove our main theorem (Theorem \ref{thm1}).
Let $e_n$ denote the $n$-th unit vector of $\mathbb{R}^n$ and $\langle\cdot,\cdot\rangle$ denote the Euclidean inner product.
Let $\widetilde{\rm pr}:\mathbb{R}^n\to\mathbb{R}^{n-1}$ be a map induced by the orthogonal projection to the hyperplane $\{\langle y,e_n \rangle =0\}$, that is,
$$\widetilde{\rm pr}(y_1,\dots,y_n):=(y_1,\dots,y_{n-1}).$$
Throughout this section we fix these notations.

\subsection{Upper bound for radius of curvature}

In this subsection we prove the ``if'' part of Theorem $\ref{thm1}$ by proving its contrapositive.
Recalling the definition of $\Sigma$, it suffices to prove the following:

\begin{proposition}\label{mainprop1}
	Let $x\in\overline{\Sigma}\setminus\Sigma$, $\pi(x)=\{\xi\}$ and $\partial\Omega$ be of class $C^1$ near $\xi$.
	Then the inequality $d(x)\geq\rho_*(\xi)$ holds.
\end{proposition}

To prove this proposition we prepare the following two lemmas about locally touching spheres and circles for subgraphs of functions.

Here we say that a continuous function $f_1$ on $A_1\in\mathcal{U}^m_y$ touches a function $f_2$ on $A_2\in\mathcal{U}^m_y$ from below (resp.\ above) at $y\in\mathbb{R}^m$ if the function $f_2-f_1$ defined on $A_1\cap A_2$ attains its local minimum (resp. maximum) at $y$ and $f_1(y)=f_2(y)$ holds.
Moreover, we say that a function defined on an open set $A\subset\mathbb{R}^m$ is an upper semi-sphere function if there are $c_0>0$, $r_0>0$ and $x_0\in\mathbb{R}^m$ such that $$f(x)=\sqrt{r_0^2-|x-x_0|^2}+c_0$$
holds for all $x\in A$.
When $m=1$ it is called an upper semi-circle function.

\begin{lemma}\label{spherelem1}
	Let $h:[a,b]\to\mathbb{R}$ be a continuous function and $\tilde{h}:[a,b]\to\mathbb{R}$ be a part of an upper semi-circle function with radius $\tilde{r}>(b-a)/2$ such that $h\geq\tilde{h}$ in $[a,b]$ and $h=\tilde{h}$ on $\partial[a,b]$.
	Then there exists $c\in(a,b)$ such that any upper semi-circle function with radius larger than $\tilde{r}$ can not touch $h$ from below at $c$.
\end{lemma}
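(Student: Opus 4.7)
The plan is to pick $c$ as a maximizer of the continuous non-negative function $g := h - \tilde{h}$ on $[a,b]$, and then compare any hypothetical touching semi-circle $\phi$ not with $h$ directly but with the smooth arc $\tilde{h}$. Since $g(a) = g(b) = 0$ and $g \geq 0$, the maximum $M := \max_{[a,b]} g \geq 0$ is attained at some $c \in (a,b)$: either $M = 0$ and any interior point works, or $M > 0$ and the maximum cannot lie at an endpoint.

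Suppose for contradiction that an upper semi-circle $\phi$ of radius $r > \tilde{r}$ touches $h$ from below at $c$, so $\phi(c) = h(c)$ and $\phi \leq h$ on some open neighborhood $U$ of $c$. Subtracting $\tilde{h}$ gives
$$(\phi - \tilde{h})(x) \leq (h - \tilde{h})(x) \leq M \quad \text{on } U, \qquad (\phi - \tilde{h})(c) = h(c) - \tilde{h}(c) = M,$$
so $\phi - \tilde{h}$ attains a local maximum at $c$. The condition $\tilde{r} > (b-a)/2$ combined with $c \in (a,b)$ guarantees that $\tilde{h}$ is smooth at $c$ with finite slope, and $\phi$ being defined on an open neighborhood of $c$ (as forced by the definition of touching) forces the same for $\phi$. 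The first-order condition then yields $\phi'(c) = \tilde{h}'(c) =: m \in \mathbb{R}$, and the formula $y''(x) = -(1+y'(x)^2)^{3/2}/R$ for an upper semi-circle of radius $R$ gives
$$\phi''(c) - \tilde{h}''(c) = (1 + m^2)^{3/2}\left(\tfrac{1}{\tilde{r}} - \tfrac{1}{r}\right) > 0.$$
Hence $\phi - \tilde{h}$ in fact has a strict local \emph{minimum} at $c$, contradicting the local maximum derived above.

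The main obstacle is arriving at the right choice of $c$: the conclusion fails for a generic interior point, and it is the maximality of $h - \tilde{h}$ at $c$ that allows the reduction from the opaque object $h$ to the smooth comparison arc $\tilde{h}$. Once this choice is made, the remainder is a one-line second-order computation reflecting the geometric fact that, among upper-semicircular arcs tangent to a given line, a larger radius produces a flatter (less concave) arc.
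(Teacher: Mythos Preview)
Your proof is correct and follows essentially the same approach as the paper: both select $c$ as a maximizer of $h-\tilde h$ (the paper phrases this as sliding $\tilde h$ upward by $\alpha=\max(h-\tilde h)$ until it touches $h$ from above). The paper then concludes in one line that an above-touching semi-circle of radius $\tilde r$ precludes any below-touching semi-circle of larger radius, whereas you spell out this comparison explicitly via the first- and second-order conditions; the substance is the same.
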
 

\begin{proof}
	The case $h\equiv\tilde{h}$ is obvious thus we assume $h\not\equiv\tilde{h}$.
	Since $h\geq\tilde{h}$ and $h-\tilde{h}$ is continuous on $[a,b]$ we can take a constant $\alpha:=\max(h-\tilde{h})>0$.
	Then $\tilde{h}+\alpha\geq h$ holds in $[a,b]$ and, noting the boundary condition, there exists $c\in(a,b)$ such that $\tilde{h}(c)+\alpha=h(c)$.
	In particular, the upper semi-circle function $\tilde{h}+\alpha$ with radius $\tilde{r}$ touches $h$ from above at $c$ (see Figure \ref{spherefig1}).
	This implies that any upper semi-circle function with radius larger than $\tilde{r}$ can not touch $h$ from below at $c$.
\end{proof}

\begin{figure}[tb]
	\begin{center}
		\includegraphics[width=60mm]{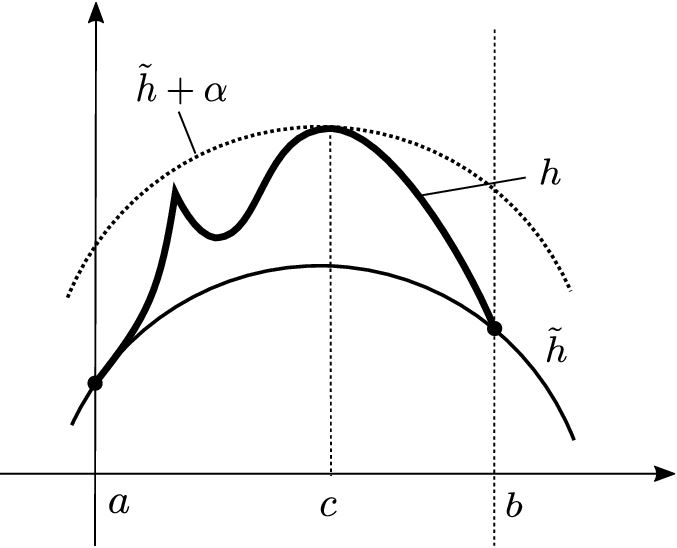}
		\caption{Continuous function on upper semi-circle}
		\label{spherefig1}
	\end{center}
\end{figure}

\begin{remark}\label{sphererem1}
	In the above lemma, the fact is that the subgraph $\{y<h(x)\}$ does not admit a locally inner touching ball (disk) with radius larger than $\tilde{r}$ at $(c,h(c))$.
	We should be careful that, in general, the existence of a locally inner touching ball for the subgraph of a function $f$ does not imply the existence of an upper semi-sphere function with the same radius touching $f$ from below (the converse is generally valid).
	A counterexample is given by $f(x)=\textrm{sign}(x)\sqrt{1-(|x|-1)^2}$.
	In this case any upper semi-circle function does not touch $f$ from below at $0$ since $\lim_{x\to\pm0}f'(x)=\infty$ but the subgraph admits a locally inner touching ball with radius $1$ at the origin.
	However, in particular, if $f$ is differentiable at a point then the two conditions are equivalent there.
	(Actually, it is sufficient that $f$ is touched by a cone function from above.)
\end{remark}

\begin{lemma}\label{spherelem2}
	Let $m\geq1$, $x_0\in\mathbb{R}^m$, $U\in\mathcal{U}^m_{x_0}$ and $f:U\to \mathbb{R}$ be a continuous function which is differentiable at $x_0$.
	Suppose that there exists an upper semi-sphere function with radius $\tilde{r}$ touching $f$ from below at $x_0$.
	Then for any open segment $I\subset U$ containing $x_0$ there exists an upper semi-circle function with radius not smaller than $\frac{\tilde{r}}{\sqrt{1+|\nabla f(x_0)|^2}}$ touching $f|_I$ from below at $x_0$.
\end{lemma}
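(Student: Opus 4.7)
The plan is to realize the upper semi-sphere as the graph of the explicit function $h(x)=z_0+\sqrt{r_0^2-|x-x_0|^2}$ for some center $(x_0,z_0)\in\mathbb{R}^n\times\mathbb{R}$ with $|y-x_0|<r_0$, and then to slice this graph by the vertical $2$-plane spanned by the segment $I$ and the vertical $z$-axis. The restriction of the graph to that slice will be an upper semi-circle, whose radius can be computed and estimated directly.

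First I would check that necessarily $|y-x_0|<r_0$: at the equator $|y-x_0|=r_0$ the function $h$ has a vertical tangent, so a $C^1$ function $f$ cannot locally dominate it. Hence $h$ is $C^1$ near $y$ and the usual first-order touching condition gives $\nabla h(y)=\nabla f(y)$. Since $\nabla h(y)=-(y-x_0)/\sqrt{r_0^2-|y-x_0|^2}$, this yields $|\nabla f(y)|^2=|y-x_0|^2/(r_0^2-|y-x_0|^2)$, which rearranges to the key identity
$$\sqrt{r_0^2-|y-x_0|^2}=\frac{r_0}{\sqrt{1+|\nabla f(y)|^2}}.$$

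Next I would parameterize $I$ as $\gamma(t)=y+tv$ for some unit direction $v$ and restrict $h$ along $\gamma$. Completing the square in $|y+tv-x_0|^2=(t+\langle v,y-x_0\rangle)^2+e^2$, where $e$ denotes the perpendicular distance from $x_0$ to the affine line $y+\mathbb{R}v$, gives
$$h(\gamma(t))=z_0+\sqrt{(r_0^2-e^2)-(t+\langle v,y-x_0\rangle)^2},$$
which in the $(t,z)$-plane is exactly an upper semi-circle of radius $r=\sqrt{r_0^2-e^2}$. Since the orthogonal projection yields $e\leq|y-x_0|$, combining with the identity above gives $r\geq r_0/\sqrt{1+|\nabla f(y)|^2}$. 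Moreover, the local inequality $h\leq f$ on a neighborhood of $y$ in $U$ restricts along $\gamma$ to $h\circ\gamma\leq f\circ\gamma$ on a neighborhood of $t=0$ with equality at $t=0$, so this semi-circle genuinely touches $f|_I$ from below at $y$.

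I do not foresee any real conceptual obstacle beyond this bookkeeping; the geometry is essentially that of a rotationally symmetric sphere intersected by a plane through an interior point. The only delicate step is the preliminary observation $|y-x_0|<r_0$, which is what justifies the gradient matching and gives meaning to the key identity, and it relies precisely on the $C^1$-regularity hypothesis on $f$.
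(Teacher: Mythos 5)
Your proposal is correct and is essentially the paper's own argument, just carried out explicitly: the gradient-matching identity $\sqrt{r_0^2-|y-x_0|^2}=r_0/\sqrt{1+|\nabla f(y)|^2}$ is the quantitative form of the paper's observation that the tangent planes agree, and the vertical slicing along $I$ is exactly the paper's ``vertical section'' of the sphere. Your preliminary check that $|y-x_0|<r_0$ is a welcome extra precision, not a deviation.
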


\begin{proof}
	Without loss of generality, we may assume that $|x_0|<\tilde{r}$, $I\subset B^m_{\tilde{r}}(0)$ and the upper semi-sphere in the assumption is represented by $$\tilde{f}(x)=\sqrt{\tilde{r}^2-|x|^2}.$$
	Since $\tilde{f}$ touches $f$ from below at $x_0$, we have $f\geq\tilde{f}$ near $x_0$ and
	$$\nabla f(x_0)=\nabla\tilde{f}(x_0)=\frac{-x_0}{\sqrt{\tilde{r}^2-|x_0|^2}}.$$
	Now let $L\subset\mathbb{R}^n$ be the line through $I$ and $x_*\in L$ be a unique point so that $|x_*|=\min_{x\in L}|x|$.
	Since $x_0\in I\subset L$, notice that $|x_*|\leq|x_0|$.
	Moreover for any $x\in I$, noting that $x-x_*$ is perpendicular to $x_*$, we have
	$$\tilde{f}|_I(x)=\sqrt{\tilde{r}^2-|x_*|^2-|x-x_*|^2}$$
	which implies that $\tilde{f}|_I$ is an upper semi-circle function with radius $\sqrt{\tilde{r}^2-|x_*|^2}$.
	Therefore, noting that $\tilde{f}|_I$ touches $f|_I$ from below at $x_0\in I$ and
	$$\sqrt{\tilde{r}^2-|x_*|^2}\geq\sqrt{\tilde{r}^2-|x_0|^2}=\frac{\tilde{r}}{\sqrt{1+|\nabla f(x_0)|^2}},$$
	we obtain the conclusion.
\end{proof}

Now we prove Proposition \ref{mainprop1}.
We could refer Figure \ref{spherefig2} in the proof.
Without loss of generality, we may assume $x=0$ and $\xi=de_n$, where $d=d(x)>0$.
Then we can represent $\partial\Omega$ by a $C^1$-graph near $\xi$ in the direction of $e_n$: there exist $0<\varepsilon_0<d$, a neighborhood $U_\xi\in\mathcal{U}^n_\xi$ and a $C^1$-function $g$ on $B^{n-1}_{\varepsilon_0}(0)$ such that $g(0)=d$, $\nabla g(0)=0$ and $\tilde{g}(B^{n-1}_{\varepsilon_0}(0))=\partial\Omega\cap U_\xi$, where $\tilde{g}(\cdot):=(\cdot,g(\cdot))$.

\begin{proof}[Proof of Proposition \ref{mainprop1}]
	Since $x=0\in\overline{\Sigma}$, there exists a sequence $\{x_k\}\subset\Sigma$ such that $x_k\to 0$.
	Then for any $k$ the set $\pi(x_k)\subset\partial\Omega$ has at least two elements.
	We denote them by $\xi_k^1$ and $\xi_k^2$.
	Since $\pi$ is set-valued upper semicontinuous by Lemma \ref{metprolem}, both $\xi_k^1$ and $\xi_k^2$ converge to $\xi$ as $k\to\infty$.
	Thus we may assume that for any $k$
	$$\xi_k^1,\xi_k^2\in\tilde{g}(B^{n-1}_{\varepsilon_0}(0))=\partial\Omega\cap U_\xi$$
	and the $n$-th components of $\xi_k^1-x_k$ and $\xi_k^2-x_k$ are positive, that is, the points $\xi_k^1$ and $\xi_k^2$ lie in the upper semi-sphere part of $S^{n-1}_{d(x_k)}(x_k)$.
	Define $\xi_k^{'i}:=\widetilde{\rm pr}(\xi_k^i)\in B^{n-1}_{\varepsilon_0}(0)$ for $i=1,2$.
	Noting that $\xi_k^{'1}\neq\xi_k^{'2}$, we can define $I_k$ as the closed segment joining $\xi_k^{'1}$ to $\xi_k^{'2}$.
	
	Since the radius of any circle obtained as a section of $S^{n-1}_{d(x_k)}(x_k)$ is at most $d(x_k)$, the function $g|_{I_k}$ satisfies the assumption of Lemma \ref{spherelem1} for an upper semi-circle function with radius not larger than $d(x_k)$.
	Thus, by Lemma \ref{spherelem1}, there exists $\xi_k^{'3}\in I_k\setminus\{\xi_k^{'1},\xi_k^{'2}\}$ such that any upper semi-circle function with radius larger than $d(x_k)$ can not touch $g|_{I_k}$ from below at $\xi_k^{'3}$.
	
	Therefore, by the contrapositive of Lemma \ref{spherelem2}, any upper semi-sphere function with radius larger than $d(x_k)\sqrt{1+|\nabla g(\xi_k^{'3})|^2}$ can not touch $g$ from below at $\xi_k^{'3}$.
	Noting Remark \ref{sphererem1} and that $g$ is differentiable, the above fact yields the nonexistence of locally inner touching balls with radius larger than $d(x_k)\sqrt{1+|\nabla g(\xi_k^{'3})|^2}$ at $\xi_k^3:=\tilde{g}(\xi_k^{'3})\in\partial\Omega$, thus we have
	$$\rho(\xi_k^3)\leq d(x_k)\sqrt{1+|\nabla g(\xi_k^{'3})|^2}.$$
	Since $d(x_k)\to d(x)$, $\xi_k^{'3}\to0$ and $\xi_k^3\to\xi$ as $k\to\infty$ and $g$ is of class $C^1$, we obtain
	$$\rho_*(\xi)\leq\liminf_{k\to\infty}\rho(\xi_k^3)\leq d(x)\sqrt{1+|\nabla g(0)|^2}=d(x)$$
	which is the desired inequality.
\end{proof}

\begin{figure}[tb]
	\begin{center}
		\includegraphics[width=100mm]{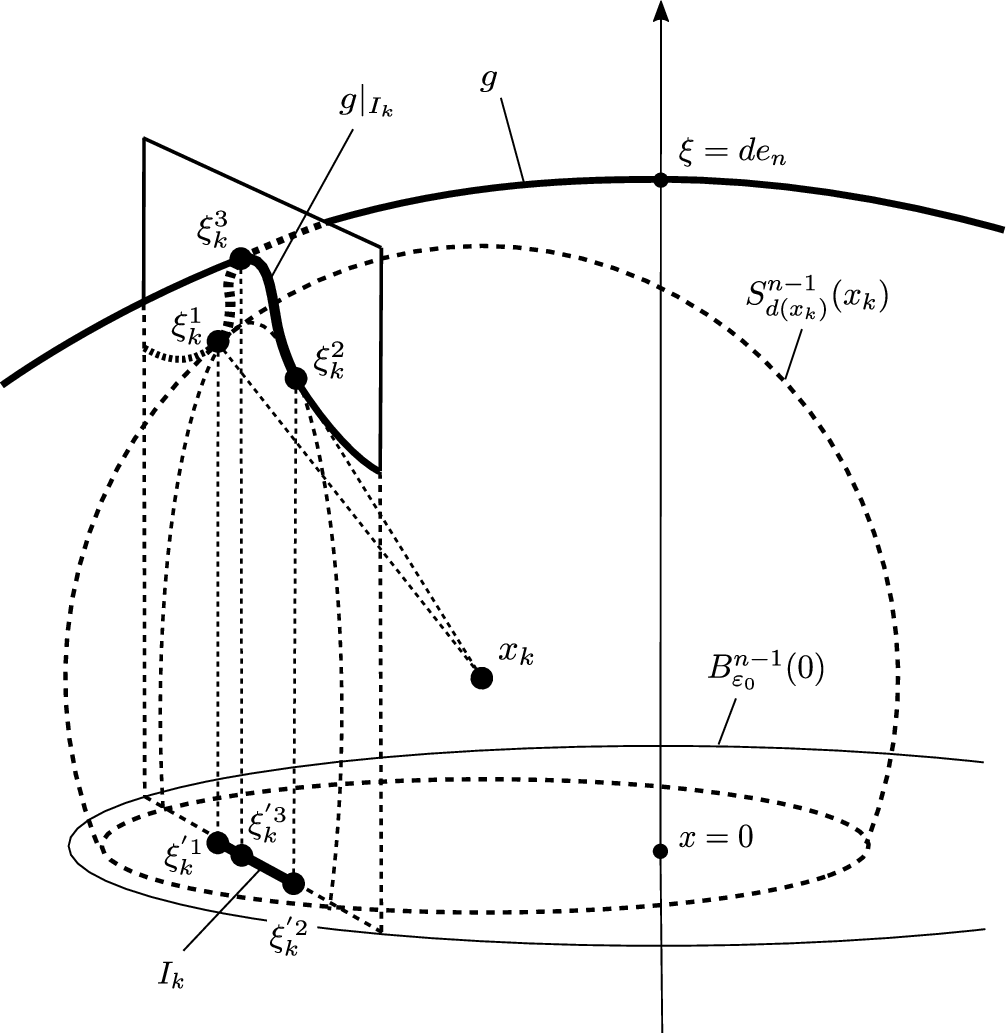}
		\caption{Panorama of the sequences: proof of Proposition \ref{mainprop1}}
		\label{spherefig2}
	\end{center}
\end{figure}

\subsection{Lower bound for radius of curvature}

In this subsection we prove the ``only if'' part of Theorem $\ref{thm1}$ for a class of $\Omega\subset\mathbb{R}^n$.
The class contains all open sets with $C^1$-boundary.
We first define this class.

\begin{definition}\label{defnonsprper}
	Let $\Omega\subset\mathbb{R}^n$ be an open set with nonempty boundary.
	We say that a point $\eta\in\partial\Omega$ has {\it non-spreading inner perpendicular} if either, $\pi^{-1}(\{\eta\})$ is empty, or nonempty $\pi^{-1}(\{\eta\})$ is contained in a (unique) line $L\subset\mathbb{R}^n$.
	In addition, if $\partial\Omega$ is locally represented by a graph near $\eta$ in the direction of $L$, then we say that the point $\eta\in\partial\Omega$ has {\it non-spreading inner perpendicular with graph representation}.
\end{definition}

\begin{remark}
	Note that if $\eta\in\partial\Omega$ has non-spreading inner perpendicular and there exists $y\in\Omega$ such that $\eta\in\pi(y)$ then, even when $y\not\in\pi^{-1}(\{\eta\})$, the pull-back $\pi^{-1}(\{\eta\})$ is not empty and the line $L$ passes through $y$ and $\eta$.
	That is because for any $t\in(0,1)$ the point $ty+(1-t)\eta$ belongs to $\pi^{-1}(\{\eta\})$.
\end{remark}

\begin{remark}	
	For any open set with $C^1$-boundary, all points in its boundary have non-spreading inner perpendicular with graph representation (in this case $L$ is in the normal direction of $\partial\Omega$).
	In addition, the same property is valid for some more non-smooth sets, for example general convex open sets.
\end{remark}

Then the ``only if'' part follows by proving the following.

\begin{proposition}\label{mainprop2}
	Let $x\in\Omega\setminus\overline{\Sigma}$ and $\pi(x)=\{\xi\}$.
	Suppose that $\xi\in\partial\Omega$ has non-spreading inner perpendicular with graph representation.
	Then there exist $\delta>0$ and $r_\delta>0$ such that for any $\eta\in B^n_{r_\delta}(\xi)\cap\partial\Omega$ the inequality $d(x)+\delta\leq\rho(\eta)$ holds.
\end{proposition}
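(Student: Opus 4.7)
The plan is to find, for every $\eta$ in a neighborhood of $\xi$ in $\partial\Omega$, an explicit locally inner touching ball at $\eta$ of radius at least $d(x)+\delta$ with a uniform $\delta>0$. Such balls will come from points $y$ on a small hemisphere $H$ placed just behind $x$ on the normal line $L$: the ball $B^n_{d(y)}(y)$ will lie in $\mathfrak{B}^n_{\hat\pi(y)}$, and the key will be to show that $\hat\pi$ sends $H$ onto a neighborhood of $\xi$.

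Using the non-spreading perpendicular at $\xi$, choose coordinates so that $\xi=0$, $L=\mathbb{R}e_n$, $x=-d(x)e_n$, and $\partial\Omega\cap U_\xi=\tilde g(B^{n-1}_{\varepsilon_0}(0))$ with $g(0)=0$ (and $\nabla g(0)=0$ in the $C^1$ case, since the inner touching ball $B^n_{d(x)}(x)\in\mathfrak{B}^n_\xi$ pins the tangent hyperplane horizontal at $\xi$). Fix $\delta_0>0$ with $\overline{B}^n_{\delta_0}(x)\subset\Omega\setminus\overline{\Sigma}$, pick $\delta_1\in(0,\delta_0)$, and set $x':=-(d(x)+\delta_1)e_n$. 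Since the gradient flow $\dot y=\nabla d(y)=(y-\hat\pi(y))/d(y)$ preserves $\hat\pi$ in $\Omega\setminus\overline{\Sigma}$ and, starting at $x$, traces the ray $\{x-tv\}_{t\geq 0}$ with $v:=e_n$, we obtain $\pi(x')=\{\xi\}$ and $d(x')=d(x)+\delta_1$. With $\delta_2:=\delta_1/2$ form the closed upper hemisphere
\[
H:=\bigl\{\,x'+\delta_2\bigl(u,\sqrt{1-|u|^2}\bigr):u\in\overline{B}^{n-1}_1(0)\,\bigr\}\subset\Omega\setminus\overline{\Sigma}.
\]
The $1$-Lipschitz property of $d$ gives $d(y)\geq d(x')-\delta_2=d(x)+\delta_1/2$ for every $y\in H$; since $y\in\Omega\setminus\overline{\Sigma}$, the open ball $B^n_{d(y)}(y)\subset\Omega$ touches $\partial\Omega$ only at $\hat\pi(y)$ and hence belongs to $\mathfrak{B}^n_{\hat\pi(y)}$, so $\rho(\hat\pi(y))\geq d(y)\geq d(x)+\delta_1/2$. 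Setting $\delta:=\delta_1/2$, the proposition reduces to showing $\hat\pi(H)\supset\partial\Omega\cap B^n_{r_\delta}(\xi)$ for some $r_\delta>0$.

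To establish this covering I would pull back through $\tilde g$ and apply Brouwer degree theory. Define the continuous map
\[
\Psi:\overline{B}^{n-1}_1(0)\to\mathbb{R}^{n-1},\qquad \Psi(u):=\widetilde{\mathrm{pr}}\bigl(\hat\pi\bigl(x'+\delta_2(u,\sqrt{1-|u|^2})\bigr)\bigr).
\]
Then $\Psi(0)=0$ because the pole $x'+\delta_2 e_n\in L$ projects to $\xi$; and for $u\in S^{n-2}_1(0)$, the equator point $x'+\delta_2(u,0)\notin L$, so the non-spreading perpendicular condition $\pi^{-1}(\{\xi\})\subset L$ forces $\hat\pi(x'+\delta_2(u,0))\neq\xi$, i.e.\ $\Psi(u)\neq 0$. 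It then suffices to prove $\deg(\Psi,B^{n-1}_1(0),0)\neq 0$: stability of Brouwer degree under perturbation of the target yields $p\in\Psi(\overline{B}^{n-1}_1(0))$ for every $p$ in a neighborhood of $0\in\mathbb{R}^{n-1}$, and $\tilde g$ translates this into the desired neighborhood of $\xi$ in $\hat\pi(H)$.

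The main obstacle is the degree computation. My approach would be to set up the linear homotopy $\Psi_t(u):=(1-t)\Psi(u)+tu$ from $\Psi$ to the degree-one identity $u\mapsto u$ on $\overline{B}^{n-1}_1(0)$, and to verify that it avoids $0$ on the boundary sphere $S^{n-2}_1(0)$, equivalently that $\Psi(u)$ is never a strictly negative multiple of $u$ when $|u|=1$. Writing $\hat\pi(x'+\delta_2(u,0))=(w',g(w'))$, this is the geometric assertion $\langle w',u\rangle>0$ relating the projection foot $w'$ of $(\delta_2 u,-(d(x)+\delta_1))$ to the direction $u$; in the $C^1$ setting it is ultimately a careful local analysis based on the optimality equation $w'=\delta_2 u-(g(w')+d(x)+\delta_1)\nabla g(w')$ together with the horizontal tangency $\nabla g(0)=0$ at $\xi$. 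Once the degree is shown to equal $1$, the neighborhood of $\xi$ is covered and the proposition follows.
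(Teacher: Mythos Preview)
Your overall strategy matches the paper's: place a small hemisphere $H$ behind $x$ on the normal line, show by a topological argument that $\hat\pi(H)$ covers a neighborhood of $\xi$ in $\partial\Omega$, and read off an inner touching ball of radius at least $d(x)+\delta$ at every nearby $\eta$. The paper uses a lower semi-sphere centered at $x$ itself and a no-retraction lemma in place of Brouwer degree, but these differences are cosmetic; in fact the inequality $\langle\Psi(u),u\rangle>0$ that the paper establishes is precisely what makes your linear homotopy $\Psi_t$ admissible.

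The gap is that you stop at the decisive point. You assert that verifying $\Psi_t\neq0$ on the boundary sphere ``is ultimately a careful local analysis based on the optimality equation $w'=\delta_2 u-(g(w')+d(x)+\delta_1)\nabla g(w')$,'' but you do not carry it out, and this route uses $\nabla g$ and hence only addresses the $C^1$ case, whereas the proposition is stated under the weaker non-spreading-perpendicular hypothesis. The paper supplies this step with a clean geometric argument that needs no differentiability of $g$: for $y$ on the equator one has $y\notin L$, so the non-spreading condition forces $\hat\pi(y)\neq\xi$, hence $d(y)<|\xi-y|$; since also $\hat\pi(y)\in\partial\Omega\setminus\overline{B}^n_{d(x)}(x)$, the foot $\hat\pi(y)$ lies in the lune $B^n_{|\xi-y|}(y)\setminus\overline{B}^n_{d(x)}(x)$. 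Because the paper centers its hemisphere at $x$ (so the equator sits in the hyperplane through $x$ orthogonal to $L$), an elementary computation shows every point $z$ of this lune satisfies $\langle\widetilde{\mathrm{pr}}(z),\widetilde{\mathrm{pr}}(y)\rangle>0$, which is exactly the inner-product condition. With your hemisphere centered at $x'$ strictly below $x$, the analogous lune inequality only gives $\langle z,y-x\rangle>0$, which does not directly yield $\langle w',u\rangle>0$; the paper's placement of the center at $x$ is what makes the geometry close up without appealing to $C^1$ smoothness.

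A minor secondary issue: points of your $H$ can be as far as $3\delta_1/2$ from $x$, so you need $\delta_1\leq\tfrac{2}{3}\delta_0$ rather than merely $\delta_1<\delta_0$ to guarantee $H\subset\Omega\setminus\overline{\Sigma}$.
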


To prove this, we should prepare a key lemma (Lemma \ref{homlem3}) about the image of the metric projection $\hat{\pi}$.
To this end we check a basic property of continuous maps.

\begin{lemma}\label{homlem2}
	Let $m\geq1$, $D\subset\mathbb{R}^m$ be a bounded region containing the origin and $f:\overline{D}\to\mathbb{R}^m$ be a continuous map such that
	\begin{align}\label{innpro}
		\langle f(y),y \rangle + |f(y)||y| >0 \quad \text{for any}\quad y\in \partial D.
	\end{align}
	Then there exists $\bar{r}>0$ such that $B^m_{\bar{r}}(0)\subset f(D)$.
\end{lemma}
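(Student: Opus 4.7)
The plan is to argue by contradiction, assuming that no neighborhood of $0$ lies in $f(\overline{B}^m)$, and then to construct from $f$ a continuous map $F:\overline{B}^m\to S^{m-1}$ whose restriction to $S^{m-1}$ is homotopic to $\mathrm{id}_{S^{m-1}}$, directly contradicting Lemma \ref{homlem1}.

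First I would set $c:=\min_{y\in S^{m-1}}\langle f(y),y\rangle$, which is strictly positive by hypothesis \eqref{innpro} and by compactness and continuity. I would then claim that $\bar r:=c$ works. Suppose for contradiction that there exists $p\in B^m_{\bar r}(0)\setminus f(\overline{B}^m)$. Because $f(y)\neq p$ everywhere on $\overline{B}^m$, the map
\begin{equation*}
F:\overline{B}^m\to S^{m-1},\qquad F(y):=\frac{f(y)-p}{|f(y)-p|},
\end{equation*}
is well-defined and continuous.

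The next step is to verify that $F|_{S^{m-1}}$ is homotopic to $\mathrm{id}_{S^{m-1}}$ via the normalized straight-line homotopy. For $y\in S^{m-1}$, the Cauchy–Schwarz inequality together with $|p|<c$ yields
\begin{equation*}
\langle y,f(y)-p\rangle \;\geq\; \langle y,f(y)\rangle - |p| \;\geq\; c-|p| \;>\;0,
\end{equation*}
so $\langle y,F(y)\rangle>0$ on $S^{m-1}$. In particular $y$ and $F(y)$ are never antipodal, and a short computation gives $|(1-t)y+tF(y)|^2 = (1-t)^2 + 2t(1-t)\langle y,F(y)\rangle + t^2 > 0$ for all $t\in[0,1]$. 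Thus
\begin{equation*}
H(y,t):=\frac{(1-t)y+tF(y)}{|(1-t)y+tF(y)|}
\end{equation*}
defines a continuous homotopy on $S^{m-1}\times[0,1]$ with $H(\cdot,0)=\mathrm{id}_{S^{m-1}}$ and $H(\cdot,1)=F|_{S^{m-1}}$.

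The existence of such an $F$ contradicts Lemma \ref{homlem1}, completing the proof. The only delicate point — and the step that dictates the choice of $\bar r$ — is ensuring that the straight-line homotopy does not pass through the origin; this is precisely what forces the bound $|p|<c$ and hence the size of the neighborhood $B^m_{\bar r}(0)$ guaranteed to lie in the image.
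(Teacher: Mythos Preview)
Your proof is correct and follows essentially the same approach as the paper's: both set $c=m_0:=\min_{y\in S^{m-1}}\langle f(y),y\rangle$, take a point $p=z_0$ with $|p|<c$ outside the image, normalize $f-p$ to a map into $S^{m-1}$, verify it still satisfies \eqref{innpro}, and then exhibit a homotopy to $\mathrm{id}_{S^{m-1}}$ contradicting Lemma~\ref{homlem1}. The paper first normalizes to the unit ball (so that $|y|=1$ in your Cauchy--Schwarz step) and phrases the homotopy as ``deforming along geodesics'' in $S^{m-1}$, which is exactly your normalized straight-line homotopy.
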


\begin{proof}
	Define the homotopy between the identity map $\textrm{id}$ and the function $f$ by $$H(y,t):=tf(y)+(1-t)y.$$
	Then, using (\ref{innpro}), it is easily confirmed that $|H(y,t)|^2>0$ for any $(y,t)$ in $\partial D\times[0,1]$.
	Since $\partial D\times[0,1]$ is compact and $H$ is continuous, there exists $\bar{r}>0$ such that $|H|>\bar{r}$ on $\partial D\times[0,1]$.
	Then for any $z\in B^m_{\bar{r}}(0)$ we have $z\not\in H(\partial D,[0,1])$ and hence
	$$\deg(f,D,z)=\deg(\textrm{id},D,z)=1$$
	by the homotopy invariance of mapping degree (see e.g.\ \cite[\S IV.2]{OuRu09}).
	This implies that there exists $x\in D$ such that $f(x)=z$.
	Thus $B^m_{\bar{r}}(0)\subset f(D)$.
\end{proof}

Now let us go back in our situation (Proposition \ref{mainprop2}).
We may assume again $x=0$ and $\xi=de_n$, where $d=d(x)>0$.
Under the assumption of Proposition \ref{mainprop2}, $\partial\Omega$ has a local graph representation near $\xi=de_n$ in the direction of $e_n$: there exist $\varepsilon_0>0$, a neighborhood $U_\xi\in\mathcal{U}^n_\xi$ and a function $g$ on $B^{n-1}_{\varepsilon_0}(0)$ such that $g(0)=d$ and $\tilde{g}(B^{n-1}_{\varepsilon_0}(0))=\partial\Omega\cap U_\xi$, where $\tilde{g}(\cdot):=(\cdot,g(\cdot))$.

Since $\Omega\setminus\overline{\Sigma}$ is open, we may also assume $B^n_{\varepsilon_0}(0)\subset\Omega\setminus\overline{\Sigma}$.
Moreover, we define a lower semi-sphere in $B^n_{\varepsilon_0}(0)$ by
\begin{align}\label{semisphere}
	S^{n-1}_{\varepsilon_1,-}:=\{y\in B^n_{\varepsilon_0}(0) \mid \langle y,e_n \rangle \leq 0,\ |y|=\varepsilon_1 \}
\end{align}
so that $\hat{\pi}(S^{n-1}_{\varepsilon_1,-})\subset\partial\Omega\cap U_\xi$.
Such $0<\varepsilon_1<\varepsilon_0$ can be taken since $\hat{\pi}$ is continuous by Lemma \ref{metprolem}.

Then the following key lemma holds (see also Figure \ref{imagefig1}).

\begin{lemma}\label{homlem3}
	Suppose the assumption of Proposition \ref{mainprop2} and let $S^{n-1}_{\varepsilon_1,-}$ be the semi-sphere defined as (\ref{semisphere}).
	Then there exists $\bar{r}>0$ such that
	$$B^n_{\bar{r}}(\xi)\cap\partial\Omega\subset\hat{\pi}(S^{n-1}_{\varepsilon_1,-}).$$
\end{lemma}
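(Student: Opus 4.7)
The plan is to reduce the lemma to the topological Lemma \ref{homlem2} by pushing the picture down to the hyperplane $\{y_n=0\}$. I will parameterize the lower semi-sphere $S^{n-1}_{\varepsilon_1,-}$ by the closed ball $\overline{B}^{n-1}_{\varepsilon_1}(0)$ via $y'\mapsto(y',-\sqrt{\varepsilon_1^2-|y'|^2})$, which sends its boundary sphere onto the equator $\{y_n=0,\ |y|=\varepsilon_1\}$, and then define $F:\overline{B}^{n-1}_{\varepsilon_1}(0)\to\mathbb{R}^{n-1}$ as the composition of this parameterization with $\hat{\pi}$ and $\widetilde{\mathrm{pr}}$. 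By Lemma \ref{metprolem} together with the choice of $\varepsilon_1$ forcing $\hat{\pi}(S^{n-1}_{\varepsilon_1,-})\subset U_\xi$, the map $F$ is continuous and takes values in $B^{n-1}_{\varepsilon_0}(0)$.

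The key step is to verify the inner product condition $\langle F(y'),y'\rangle>0$ on $\{|y'|=\varepsilon_1\}$, after which (up to a trivial rescaling) Lemma \ref{homlem2} supplies some $\bar r>0$ with $B^{n-1}_{\bar r}(0)\subset F(\overline{B}^{n-1}_{\varepsilon_1}(0))$. For an equator point $y=(y',0)$, write $\hat{\pi}(y)=(\hat y',g(\hat y'))$, so $F(y')=\hat y'$. Expanding
\begin{equation*}
|y-\hat{\pi}(y)|^2=|y'|^2-2\langle y',\hat y'\rangle+|\hat y'|^2+g(\hat y')^2,\qquad |y-\xi|^2=|y'|^2+d^2,
\end{equation*}
and combining with $|y-\hat{\pi}(y)|\leq|y-\xi|$ rearranges to $2\langle y',\hat y'\rangle\geq |\hat y'|^2+g(\hat y')^2-d^2$. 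The right-hand side is nonnegative because $\hat{\pi}(y)\in\partial\Omega$ and $d(0)=d$ force $|\hat y'|^2+g(\hat y')^2\geq d^2$, so upgrading the distance inequality to a strict one will give $\langle y',\hat y'\rangle>0$.

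The main obstacle, and the one place where the non-spreading inner perpendicular assumption enters, is this strict separation $\hat{\pi}(y)\neq\xi$ for $y=(y',0)$ with $y'\neq 0$. Since $\pi(0)=\{\xi\}$ and $\xi$ has non-spreading perpendicular, Remark \ref{remnonsprper} forces the line $L$ of Definition \ref{defnonsprper}(2) to contain both $0$ and $\xi$, hence $L=\mathbb{R}e_n$; the same remark then forbids $\xi\in\pi(y)$ for any $y\notin L$. Once Lemma \ref{homlem2} has been applied, the conclusion follows routinely: for $\bar r$ small enough that $B^n_{\bar r}(\xi)\cap\partial\Omega\subset U_\xi$, every such $\eta$ is of the form $\tilde g(s')$ with $|s'|\leq|\eta-\xi|<\bar r$, and the inclusion $B^{n-1}_{\bar r}(0)\subset F(\overline{B}^{n-1}_{\varepsilon_1}(0))$ produces some $y\in S^{n-1}_{\varepsilon_1,-}$ with $\hat{\pi}(y)=\tilde g(s')=\eta$, as required.
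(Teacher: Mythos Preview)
Your proof is correct and follows essentially the same approach as the paper: parameterize $S^{n-1}_{\varepsilon_1,-}$ by $\overline{B}^{n-1}_{\varepsilon_1}(0)$, apply Lemma~\ref{homlem2} to $\widetilde{\mathrm{pr}}\circ\hat\pi\circ(\text{parameterization})$, and use the non-spreading perpendicular assumption to rule out $\hat\pi(y)=\xi$ on the equator. The only difference is that the paper verifies the inner product condition geometrically (observing $\hat\pi(y)\in B^n_{|\xi-y|}(y)\setminus\overline{B}^n_d(0)$ and that every point of this lune projects to the correct half-space), whereas you expand $|y-\hat\pi(y)|^2$ and $|y-\xi|^2$ directly; both arguments encode the same two facts $d(y)<|\xi-y|$ and $|\hat\pi(y)|\geq d$.
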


\begin{proof}
	Define a homeomorphism $p_1:S^{n-1}_{\varepsilon_1,-}\to\overline{B}^{n-1}_{\varepsilon_1}(0)$ by the restriction of $\widetilde{\rm pr}$, namely $p_1:=\widetilde{\rm pr}|_{S^{n-1}_{\varepsilon_1,-}}$.
	If the map
	$$\widetilde{\rm pr}\circ\hat{\pi}\circ p_1^{-1}:\overline{B}^{n-1}_{\varepsilon_1}(0)\to\mathbb{R}^{n-1}$$
	satisfies the assumption of Lemma \ref{homlem2}, then there exists small $0<\bar{r}<\varepsilon_0$ such that $B^{n}_{\bar{r}}(\xi)\subset U_\xi$ and
	$$B^{n-1}_{\bar{r}}(0)\subset \widetilde{\rm pr}\circ\hat{\pi}\circ p_1^{-1}(\overline{B}^{n-1}_{\varepsilon_1}(0))=\widetilde{\rm pr}\left(\hat{\pi}(S^{n-1}_{\varepsilon_1,-})\right).$$
	Using these inclusions and noting that $\hat{\pi}(S^{n-1}_{\varepsilon_1,-})\subset\partial\Omega\cap U_\xi$, we obtain the conclusion
	$$B^{n}_{\bar{r}}(\xi)\cap\partial\Omega\subset\tilde{g}(B^{n-1}_{\bar{r}}(0))\subset\tilde{g}\left(\widetilde{\rm pr}\left(\hat{\pi}(S^{n-1}_{\varepsilon_1,-})\right)\right)=\hat{\pi}(S^{n-1}_{\varepsilon_1,-}).$$
	
	Therefore, it suffices to confirm that the map $\widetilde{\rm pr}\circ\hat{\pi}\circ p_1^{-1}$ satisfies the assumption of Lemma \ref{homlem2}, where $\overline{D}=\overline{B}^{n-1}_{\varepsilon_1}(0)$.
	This map is obviously continuous by Lemma \ref{metprolem}.
	The condition (\ref{innpro}) is proved as follows.
	Fix any $y'\in\partial\overline{B}^{n-1}_{\varepsilon_1}(0)$ and denote
	$$y:=p_1^{-1}(y')=(y',0)\in\mathbb{R}^{n}.$$
	Clearly, $d(y)\leq|\xi-y|$ holds.
	Moreover, we see that $d(y)<|\xi-y|$ since $\partial\Omega$ has non-spreading inner perpendicular.
	Indeed, the equality $d(y)=|\xi-y|$ implying $\hat{\pi}(y)=\xi$ can not be attained since if so then $\{0,y\}\subset\pi^{-1}(\{\xi\})$ but the three points $0$, $\xi$, $y\in\mathbb{R}^n$ are not in alignment.
	Thus we find that $\hat{\pi}(y)\in B^n_{|\xi-y|}(y)\cap\partial\Omega$.
	In addition, we find that $\hat{\pi}(y)\notin\overline{B}^n_d(0)$ since $\overline{B}^n_d(0)\cap\partial\Omega=\{\xi\}$ and $\xi\notin B^n_{|\xi-y|}(y)$.
	Noting that any $z\in B^n_{|\xi-y|}(y)\setminus\overline{B}^n_d(0)$ satisfies $\langle \widetilde{\rm pr}(z),\widetilde{\rm pr}(y) \rangle>0$ by the shape of $B^n_{|\xi-y|}(y)\setminus\overline{B}^n_d(0)$, we have
	$$\langle \widetilde{\rm pr}\left(\hat{\pi}(y)\right),\widetilde{\rm pr}(y) \rangle = \langle \widetilde{\rm pr}\circ\hat{\pi}\circ p_1^{-1}(y'),y' \rangle >0.$$
	This immediately implies the condition (\ref{innpro}).
\end{proof}

\begin{figure}[tb]
	\begin{center}
		\includegraphics[width=60mm]{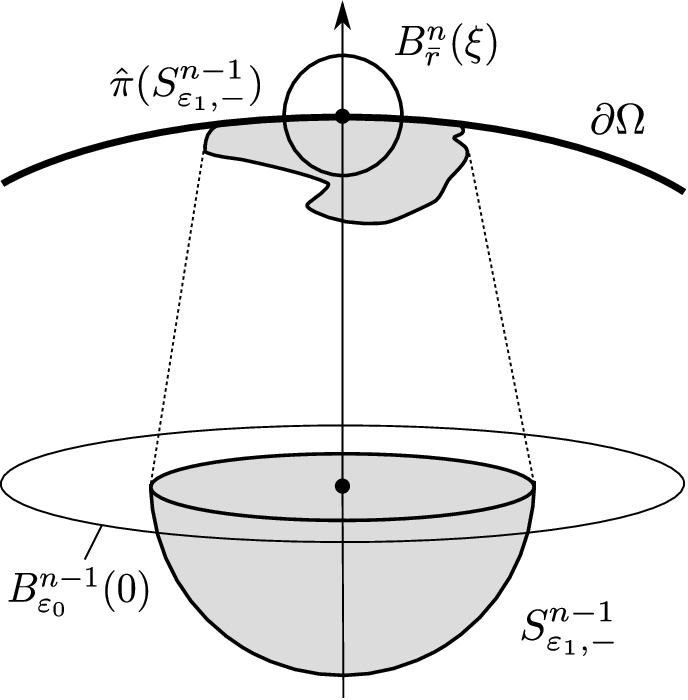}
		\caption{Image of the metric projection $\hat{\pi}(S^{n-1}_{\varepsilon_1,-})$}
		\label{imagefig1}
	\end{center}
\end{figure}

We are now in position to prove the main proposition.

\begin{proof}[Proof of Proposition \ref{mainprop2}]
	By Lemma \ref{homlem3}, there exists $\bar{r}>0$ such that
	$$B^n_{\bar{r}}(\xi)\cap\partial\Omega\subset\hat{\pi}(S^{n-1}_{\varepsilon_1,-}).$$
	Therefore, for any $\eta\in B^n_{\bar{r}}(\xi)\cap\partial\Omega$ there exists $y_\eta\in S^{n-1}_{\varepsilon_1,-}$ such that $\hat{\pi}(y_\eta)=\eta$ which implies that $B^n_{|\eta-y_\eta|}(y_\eta)$ is an inner touching ball at $\eta\in\partial\Omega$.
	This yields that $\rho(\eta)\geq|\eta-y_\eta|$.
	In addition, by the definition of $S^{n-1}_{\varepsilon_1,-}$, there exist $\delta>0$ and $0<r_\delta<\bar{r}$ such that the distance between $B^n_{r_\delta}(\xi)$ and $S^{n-1}_{\varepsilon_1,-}$ is not smaller than $d(x)+\delta$.
	Then for any $\eta\in B^n_{r_\delta}(\xi)\cap\partial\Omega$ we have
	$$\rho(\eta)\geq|\eta-y_\eta|\geq d(x)+\delta.$$
	The proof is completed.
\end{proof}

\begin{remark}
	The above proof does not use the differentiability of the distance function.
	We have another proof of this part, which uses the implicit function theorem for the distance function (and requires slightly different assumptions) but is considerably shorter.
	The statement and proof are given in the rest of this section.
	We would like to thank a referee for suggesting this tack in the reviewing process.
\end{remark}

\begin{proposition}\label{mainprop3}
	Let $x\in\Omega\setminus\overline{\Sigma}$ and $\pi(x)=\{\xi\}$.
	Suppose that there is a neighborhood $U$ of $\xi$ in $\partial\Omega$ such that any $\eta\in\partial\Omega\cap U$ has non-spreading inner perpendicular and $\partial\Omega\cap U$ is an $(n-1)$-dimensional topological manifold.
	Then there exist $\delta>0$ and $r_\delta>0$ such that for any $\eta\in B^n_{r_\delta}(\xi)\cap\partial\Omega$ the inequality $d(x)+\delta\leq\rho(\eta)$ holds.
\end{proposition}

\begin{proof}
	Since $x\in\Omega\setminus\overline{\Sigma}$, there is small $r>0$ such that the function $d$ is of class $C^1$ in $B^n_r(x)\subset\Omega\setminus\overline{\Sigma}$ and, by Lemma \ref{metprolem}, $\hat{\pi}(B^n_r(x))\subset\partial\Omega\cap U$.
	Define $$\Gamma=\{y\in B^n_r(x) \mid d(y)=d(x)\}.$$
	Noting that $|\nabla d|=1$ if it is differentiable, $\Gamma$ is a $C^1$ hypersurface by the implicit function theorem.
	By the assumption of non-spreading inner perpendicular, the restriction map $\hat{\pi}|_\Gamma$ is injective.
	Hence, noting the assumption of topological manifold, the map $\hat{\pi}|_\Gamma$ is a homeomorphism to its image by the invariance of domain theorem; in particular, the image of any neighborhood of $x$ in $\Gamma$ of $\hat{\pi}$ is a neighborhood of $\xi$ in $\partial\Omega$.
	For any small open region neighborhood $V$ of $x$ in $\Gamma$, the boundary of $\hat{\pi}(V)$ in $\partial\Omega$ is strictly far from $\xi$, thus even if the set $V$ is slightly shifted in the direction away from $\partial\Omega$ (while $V\subset B^n_r(x)$) the image of $\hat{\pi}$ still contains a neighborhood of $\xi$.
	This is justified by the theory of mapping degree.
	The proof is now complete.
\end{proof}

\section{A sufficient condition for vanishing of Lebesgue measure}

In this section, for $\Omega\subset\mathbb{R}^n$ with $C^{1,1}$-boundary we give a sufficient condition that the Lebesgue measure of the cut locus $\overline{\Sigma}$ vanishes by utilizing Theorem \ref{thm1}.
Let $\mathcal{L}^n$ and $\mathcal{H}^{n-1}$ denote the $n$-dimensional Lebesgue measure and the $(n-1)$-dimensional Hausdorff measure.

The following is the main theorem in this section.

\begin{theorem}\label{lebthm}
	Let $\Omega\subset\mathbb{R}^n$ be an open set with $C^{1,1}$-boundary.
	If there is an $\mathcal{H}^{n-1}$-negligible subset $\Gamma\subset\partial\Omega$ such that $\rho_*=\rho$ in $\partial\Omega\setminus\Gamma$, then $\mathcal{L}^n(\overline{\Sigma})=0$.
\end{theorem}

By this theorem we immediately obtain:

\begin{corollary}\label{lebcor}
	Let $\Omega\subset\mathbb{R}^n$ be an open set with $C^{1,1}$-boundary.
	If there is an $\mathcal{H}^{n-1}$-negligible subset $\Gamma\subset\partial\Omega$ such that $\partial\Omega\setminus\Gamma$ is $C^2$, then $\mathcal{L}^n(\overline{\Sigma})=0$.
\end{corollary}

Here that $\partial\Omega\setminus\Gamma$ is $C^2$ means that for any $\xi\in\partial\Omega\setminus\Gamma$ there is an open neighborhood $U\in\mathcal{U}^n_\xi$ such that $(\partial\Omega\setminus\Gamma)\cap U$ is a $C^2$-manifold.
Of course the assumptions in Theorem \ref{lebthm} and Corollary \ref{lebcor} exclude the example of Mantegazza-Mennucci \cite[\S3]{MaMe03}.

We now prove Theorem \ref{lebthm}.
Our proof is inspired by \cite{CrMa07}.
We first prepare the following inclusion relation.

\begin{proposition}\label{incprop}
	Let $\partial\Omega$ be of class $C^1$.
	Define for $\xi\in\partial\Omega$ and $0\leq t<\infty$
	$$\Psi(\xi,t):=\xi+t\nu(\xi),$$
	where $\nu(\xi)$ is the inner unit normal at $\xi$, and
	$$\mathcal{S}:=\{\Psi(\xi,t)\in\mathbb{R}^n\mid \xi\in\partial\Omega,\ \rho_*(\xi)<\infty,\ \rho_*(\xi)\leq t\leq \rho(\xi)\}.$$
	Then $\overline{\Sigma}\setminus\Sigma\subset\mathcal{S}$.
\end{proposition}

\begin{proof}
	Fix any $x\in\overline{\Sigma}\setminus\Sigma$ and let $\xi=\hat{\pi}(x)\in\partial\Omega$.
	By Theorem \ref{thm1} we have
	$$\rho_*(\xi)\leq d(x)=|x-\xi|<\infty.$$
	Noting that $B_{|x-\xi|}^n(x)$ is an inner touching ball at $\xi$ of $\Omega$, we also have
	$$|x-\xi|\leq\rho(\xi).$$
	Since $\nu(\xi)=\frac{x-\xi}{|x-\xi|}$, we obtain $\Psi(\xi,|x-\xi|)=x$ and hence $x\in\mathcal{S}$.
\end{proof}

Noting the above proposition and $\mathcal{L}^n(\Sigma)=0$ (see e.g.\ \cite{Al94}), it suffices to confirm the following proposition in order to prove Theorem \ref{lebthm}.

\begin{proposition}
	Let $\partial\Omega$ be of class $C^{1,1}$ and suppose that there is an $\mathcal{H}^{n-1}$-negligible subset $\Gamma\subset\partial\Omega$ such that $\rho_*(\xi)=\rho(\xi)$ holds for any $\xi\in\partial\Omega\setminus\Gamma$.
	Then $\mathcal{L}^n(\mathcal{S})=0$.
\end{proposition}

\begin{proof}
	Under the assumption, by Proposition \ref{incprop} we notice that $\mathcal{S}\subset\mathcal{S}^1\cup\mathcal{S}^2$, where
	$$\mathcal{S}^1:=\{\Psi(\xi,t)\in\mathbb{R}^n\mid \xi\in\partial\Omega,\ t=\rho_*(\xi)<\infty\},$$
	$$\mathcal{S}^2:=\{\Psi(\xi,t)\in\mathbb{R}^n\mid \xi\in\Gamma,\ t\geq0\}.$$
	Thus it suffices to confirm that $\mathcal{L}^n(\mathcal{S}^1)=\mathcal{L}^n(\mathcal{S}^2)=0$.
	
	Since $\partial\Omega$ is $C^{1,1}$, there exist countable local parametrizations: for $k\in\mathbb{N}$ there are open sets $U_k\subset\mathbb{R}^{n-1}$, $V_k\subset\mathbb{R}^n$, a compact set $C_k\subset U_k$ and a $C^{1,1}$-diffeomorphism $Y_k:U_k\to \partial\Omega\cap V_k$ such that $\partial\Omega=\bigcup_{k}Y_k(C_k)$.
	Then, noting that $\nu$ is Lipschitz since $\partial\Omega$ is $C^{1,1}$, we can define Lipschitz maps $\overline{\Psi}_k:\mathbb{R}^n\supset C_k\times[0,\infty)\to\mathbb{R}^n$ by
	$$\overline{\Psi}_k(x',t):=\Psi(Y_k(x'),t)=Y_k(x')+t\nu(Y_k(x')).$$
	
	Now we define a set $A^1_k\subset C_k\times[0,\infty)$ for any $k$ by
	$$A^1_k:=\{(x',t)\in C_k\times[0,\infty)\mid t=\rho_*(Y_k(x'))\}.$$ 
	Since $A^1_k$ is a part of the graph of the lower semicontinuous function $\rho_*\circ Y'_k$, we have $\mathcal{L}^n(A^1_k)=0$.
	Noting that $\mathcal{S}^1=\bigcup_k \overline{\Psi}_k(A^1_k)$, we obtain
	$$\mathcal{L}^n(\mathcal{S}^1)\leq \sum_k\mathcal{L}^n(\overline{\Psi}_k(A^1_k))\leq \sum_k L_k^n\mathcal{L}^n(A^1_k)=0,$$
	where $L_k$ is the Lipschitz constant of $\overline{\Psi}_k$ on $C_k$.
	
	Next we define a set $A^2_k\subset C_k\times[0,\infty)$ for any $k$ by	
	$$A^2_k:=Y_k^{-1}(\Gamma\cap Y_k(C_k))\times[0,\infty).$$
	By the area formula (see e.g.\ \cite{EvGa15}) and $\mathcal{H}^{n-1}(\Gamma)=0$, we have
	$$\mathcal{L}^{n-1}(Y_k^{-1}(\Gamma\cap Y_k(C_k)))\leq J_k^{-1}\mathcal{H}^{n-1}(\Gamma\cap Y_k(C_k))=0,$$
	where $J_k:=\min_{C_k}|JY_k|>0$.
	Thus we have $\mathcal{L}^n(A^2_k)=0$ and hence, similarly as above, $\mathcal{L}^n(\mathcal{S}^2)=0$.
\end{proof}

\section{A counterexample with Lipschitz boundary}

The statement of Theorem \ref{thm1} does not hold for Lipschitz cases.
A counterexample $\Omega\subset\mathbb{R}^2$ is simply given as the union of the unit disc $\{x_1^2+x_2^2<1\}$ and the upper half plane $\{x_2>0\}$.
Note that $\overline{\Sigma}=\{x_1=0\}\cap\{x_2\geq0\}$.
Then, for example, the point $x=(1,2)\in\Omega\setminus\overline{\Sigma}$ satisfies $\hat{\pi}(x)=(1,0)$ but we find that $d(x)=2$ and
$$\rho_*(\hat{\pi}(x))=\lim_{\theta\uparrow0}\rho((\cos\theta,\sin\theta))=1.$$
However, if the boundary has no ``singular concave point'' the statement of Theorem \ref{thm1} can be valid.
This will be discussed in our forthcoming paper.

\subsection*{Acknowledgments}
The author would like to thank Professor Yoshikazu Giga, Mr.\ Hiroyuki Ishiguro and Dr.\ Atsushi Nakayasu for their helpful comments and discussion.
He is also grateful to anonymous referees for useful comments for the improvement of this paper.
This work was supported by a Grant-in-Aid for JSPS Fellows 15J05166 and the Program for Leading Graduate Schools, MEXT, Japan.

\end{document}